\documentclass[a4paper]{amsart}

\textwidth 14cm
\oddsidemargin -0.01cm
\addtolength{\textheight}{2cm}
\addtolength{\topmargin}{-1cm}
\usepackage[latin1]{inputenc}
\usepackage[T1]{fontenc}
\usepackage{setspace}
\onehalfspacing

\usepackage{amsmath, graphicx, cite, enumerate, comment}
\usepackage{amssymb, amsthm, amscd}
\usepackage{latexsym}
\usepackage[T1]{fontenc}	
\usepackage[english]{babel}
\usepackage{amsfonts}
\usepackage{amscd}
\usepackage{cite}

\newtheorem{theorem}{Theorem}[section]

\newtheorem{corollary}[theorem]{Corollary}

\newtheorem{lemma}[theorem]{Lemma}

\theoremstyle{definition}

\theoremstyle{remark}
\newtheorem{remark}[theorem]{Remark}

\numberwithin{equation}{section}

\begin{document}
\date{}
\title[Compactness of the space of minimal hypersurfaces]
{Compactness of the space of minimal hypersurfaces with bounded volume and $p-$th Jacobi eigenvalue}
\author{Lucas Ambrozio, Alessandro Carlotto and Ben Sharp}
\address{Department of Mathematics \\
	Imperial College of Science and Technology \\
	London}
\email{l.ambrozio@imperial.ac.uk, a.carlotto@imperial.ac.uk, benjamin.sharp@imperial.ac.uk}

\begin{abstract}
Given a closed Riemannian manifold of dimension less than eight, we prove a compactness result for the space of closed, embedded minimal hypersurfaces satisfying a volume bound and a uniform lower bound on the first eigenvalue of the stability operator. When the latter assumption is replaced by a uniform lower bound on the $p-$th Jacobi eigenvalue for $p\geq 2$ one gains strong convergence to a smooth limit submanifold away from at most $p-1$ points. 
\end{abstract}	

\maketitle

\maketitle

\section{Introduction} \label{sec:intro}

Let $(N^{n+1},g)$ be a closed Riemannian manifold and let us denote by $\mathfrak{M}^n(N)$ the class of closed, smooth and embedded minimal hypersurfaces\footnote{Throughout this paper, we shall always tacitly assume all hypersurfaces to be connected.} $M\subset N$. By the seminal work of Almgren-Pitts \cite{Pit81} (and Schoen-Simon \cite{SS81}) we know that such a set $\mathfrak{M}^n(N)$ is not empty whenever $2\leq n\leq 6$, the higher-dimensional counterpart of their method being obstructed by the occurrence of singularities of mass-minimizing currents. Over the last three decades, several existence results have been proven by means of equivariant constructions, desingularization, gluing and more recently, high-dimensional min-max techniques that ensure that in many cases of natural geometric interest the set $\mathfrak{M}^n(N)$ contains plenty of elements. Most remarkably, it was proven by Marques and Neves in \cite{MN13} that when $2\leq n\leq 6$ and the Ricci curvature of $g$ is positive, then $N$ contains at least countably many closed, embedded minimal hypersurfaces. Thus, one is led to investigate the \textsl{global structure} of the class $\mathfrak{M}^n(N)$ and the most basic question in this sense is perhaps that of finding geometrically natural and meaningful conditions that ensure the compactness of subsets of this space.
In the three-dimensional scenario, namely for $n=2$, and under an assumption on the positivity of the ambient Ricci curvature a prototypical statement was obtained, in 1985, by Choi and Schoen:

\begin{theorem}\cite{CS85}\label{thm:3dCS}
	Let $N$ be a compact 3-dimensional manifold with positive Ricci curvature. Then the space of compact embedded minimal surfaces of fixed topological type in $N$ is compact in the $C^{k}$ topology for any $k\geq 2$. Furthermore, if $N$ is real analytic, then this space is a compact finite-dimensional real analytic variety.
\end{theorem}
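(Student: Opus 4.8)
The plan is to prove that any sequence $\{\Sigma_j\}$ of compact embedded minimal surfaces in $N$ of a \emph{fixed} topological type admits a subsequence converging, in $C^k$ for every $k\ge 2$, to some compact embedded minimal surface $\Sigma_\infty\subset N$ of the same type; I describe the two-sided case, the general one needing only routine changes via the orientation double cover. Everything rests on two uniform a priori bounds. The first is an \emph{area bound}: since $\mathrm{Ric}_N\ge\mathrm{Ric}_0>0$, the Choi--Wang eigenvalue estimate gives $\lambda_1(\Sigma_j)\ge\tfrac12\mathrm{Ric}_0$ for the first Laplace eigenvalue, while a Yang--Yau--type inequality bounds $\lambda_1(\Sigma_j)\,|\Sigma_j|$ by a constant depending only on the topological type; together these force $|\Sigma_j|\le V_0$ uniformly. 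The second is a \emph{total curvature bound}: the Gauss equation for a minimal surface in a $3$-manifold gives $K_{\Sigma_j}=\overline K(T\Sigma_j)-\tfrac12|A_{\Sigma_j}|^2$, so integrating and invoking Gauss--Bonnet,
\[\int_{\Sigma_j}|A_{\Sigma_j}|^2=2\int_{\Sigma_j}\overline K(T\Sigma_j)-4\pi\chi(\Sigma_j)\le 2\|\sec_N\|_{\infty}V_0+4\pi|\chi(\Sigma_j)|=:\Lambda_0,\]
again a constant depending only on $N$ and the topological type.

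Next I would establish a curvature estimate of Choi--Schoen type: there exist $\varepsilon_0,C_0,r_0>0$ depending only on $N$ such that any minimal $\Sigma\subset B_r(x)$ with $x\in\Sigma$, $\partial\Sigma\subset\partial B_r(x)$, $r\le r_0$ and $\int_\Sigma|A_\Sigma|^2<\varepsilon_0$ satisfies $|A_\Sigma|^2(x)\le C_0\,r^{-2}\int_\Sigma|A_\Sigma|^2$. This is proved by contradiction and rescaling: a violating sequence, after Schoen's point-selection (the monotonicity formula supplying the local area bounds needed for $C^2_{\mathrm{loc}}$ convergence), would limit to a complete non-flat minimal surface in $\R^3$ with zero total curvature, i.e.\ a plane. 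With this estimate in hand, the bound $\int_{\Sigma_j}|A_{\Sigma_j}|^2\le\Lambda_0$ permits at most $\Lambda_0/\varepsilon_0$ points of curvature concentration for each $j$; after passing to a subsequence these converge to a finite set $S$. On $N\setminus S$ the curvature estimate yields a uniform bound on $|A_{\Sigma_j}|$, and with the area bound the standard graphical compactness for minimal surfaces gives $\Sigma_j\to m\,\Sigma_\infty$ in $C^\infty_{\mathrm{loc}}(N\setminus S)$ for some integer $m\ge 1$ and some smooth embedded minimal $\Sigma_\infty$; a removable-singularity argument (finite area plus minimality across $S$) extends $\Sigma_\infty$ to a compact embedded minimal surface in $N$.

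The heart of the matter is to show that $m=1$ and $S=\emptyset$, and it is here that positivity of $\mathrm{Ric}_N$ re-enters. If $S=\emptyset$ but $m\ge2$, then for large $j$ the $\Sigma_j$ are $m$ disjoint normal graphs over $\Sigma_\infty$; normalising the difference of two consecutive sheets yields in the limit a Jacobi field $\phi$ on $\Sigma_\infty$ with $\phi\ge 0$ and $\phi\not\equiv 0$, hence $\phi>0$ by the strong maximum principle, so $\Sigma_\infty$ is stable --- contradicting the fact that testing the second variation of area with the constant function gives $-\int_{\Sigma_\infty}(|A_{\Sigma_\infty}|^2+\mathrm{Ric}_N(\nu,\nu))<0$. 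If instead $S\ne\emptyset$, I would blow up $\Sigma_j$ at a point $p\in S$ by point-selection, obtaining a complete, properly embedded, non-flat minimal surface $\hat\Sigma\subset\R^3$ with $\int_{\hat\Sigma}|A_{\hat\Sigma}|^2\le\Lambda_0$ and Euclidean area growth (monotonicity plus $V_0$). By Osserman's structure theory $\hat\Sigma$ has finitely many ends, each a graph of at most logarithmic growth over a plane; since it arises as such a blow-up these planes are all parallel to $T_p\Sigma_\infty$ and the ends are linearly ordered by height, non-flatness forcing at least two of them. Tracking the two corresponding, nearly parallel sheets of $\Sigma_j$ over a fixed small annulus in $\Sigma_\infty$ and normalising their sign-definite difference produces again a nontrivial one-signed Jacobi field on an open piece of $\Sigma_\infty$: when an end is catenoidal the logarithmic separation of the sheets forces this field to be a positive constant, so the Jacobi equation gives $|A_{\Sigma_\infty}|^2+\mathrm{Ric}_N(\nu,\nu)\equiv 0$ there --- impossible since $\mathrm{Ric}_N>0$ --- and when the ends are planar the nonzero asymptotic height gap plays the same role. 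Hence $m=1$ and $S=\emptyset$, the convergence is $C^\infty$, and the smooth limit $\Sigma_\infty$ has the prescribed topological type, which gives the asserted $C^k$-compactness.

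For the final statement, assume $N$ real analytic. Then every minimal surface in $N$ is real analytic by Morrey's regularity theorem, and near any $\Sigma$ in the space the nearby minimal surfaces are the zeros of the (real-analytic, quasilinear, elliptic) mean-curvature operator $\mathcal H$ acting on normal graphs over $\Sigma$, with $D\mathcal H(0)=L_\Sigma$ Fredholm of index zero; a Lyapunov--Schmidt reduction presents this local zero set as a real-analytic variety of dimension at most $\dim\ker L_\Sigma<\infty$. The compactness just established lets one cover the space by finitely many such charts, which patch compatibly, exhibiting it as a compact real-analytic variety of dimension at most $\max_\Sigma\dim\ker L_\Sigma$. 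I expect the third step --- excluding curvature concentration and higher multiplicity --- to be the principal obstacle: that is where the structure theory of complete finite-total-curvature minimal surfaces in $\R^3$ must be married to the second-variation argument so as to reactivate the hypothesis $\mathrm{Ric}_N>0$; the curvature estimate, while itself substantial, follows a by now routine blow-up scheme.
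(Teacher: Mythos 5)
This theorem is cited from Choi--Schoen \cite{CS85}; the present paper does not prove it, and the only gloss offered is the one-sentence remark that a genus bound controls both the area (via the Choi--Wang eigenvalue estimate \cite{CW83}) and the second fundamental form. Your reconstruction is consistent with that gloss and recovers the essential architecture of the original argument: Choi--Wang plus Yang--Yau for the uniform area bound, the Gauss equation and Gauss--Bonnet for the uniform bound on $\int|A|^2$, the small-total-curvature $\varepsilon$-regularity curvature estimate proved by blow-up, and the use of $\mathrm{Ric}_N>0$ through a Jacobi-field argument to rule out higher multiplicity and curvature concentration. The final passage to the real-analytic variety via Morrey regularity and a Lyapunov--Schmidt reduction of the mean-curvature operator is also the standard route. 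So the overall strategy is sound and faithful.

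The one soft spot is the treatment of the case $S\neq\emptyset$. The assertion that tracking two nearly parallel sheets of $\Sigma_j$ near a catenoidal blow-up end forces the renormalized difference to converge to a \emph{constant} Jacobi field on an open piece of $\Sigma_\infty$ is not justified; one only gets a nonnegative solution of $L_{\Sigma_\infty}\phi=0$, with no a priori reason for it to be constant. The conclusion you need is available more cheaply, and in a way that parallels what the present paper actually does in the $p\geq 2$ case of Theorem~\ref{thm:main}: at a point of $S$ where the limit has multiplicity one, Allard's interior regularity theorem \cite{All72} upgrades the varifold convergence to $C^{1,\alpha}$ (hence, by elliptic theory, smooth) graphical convergence, so that point is not in $S$; thus every point of $S$ must carry multiplicity at least two, and then the very Jacobi-field construction you used in the $m\geq 2$, $S=\emptyset$ case applies on $\Sigma_\infty\setminus S$. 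Since points have zero capacity in dimension two, the resulting nonnegative, nontrivial Jacobi field extends across $S$ in $W^{1,2}$, is smooth, and is strictly positive by the strong maximum principle, whence $\Sigma_\infty$ is stable --- contradicting $\mathrm{Ric}_N>0$. This eliminates the Osserman-end bookkeeping entirely and closes the gap.
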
	

Roughly speaking, the idea behind this result is that a uniform bound on the genus suffices for controlling both the area (as had already been observed in \cite{CW83}) and the second fundamental form of the minimal surfaces in question, in a uniform fashion.

When $n\geq 3$ new phenomena appear and a statement of this type cannot possibly be expected. Indeed, it was shown by Hsiang \cite{Hsi83} that in $S^{n+1}, n=3, 4, 5$ there exists a sequence of embedded minimal hyperspheres $M^{n}$ that have uniformly bounded volume and converge, in the sense of varifolds, to a singular minimal subvariety with two conical singularities located at antipodal points of the ambient manifold. Based on the seminal works of Schoen-Simon-Yau \cite{SSY75} and Schoen-Simon \cite{SS81} concerning stable minimal hypersurfaces, one is naturally led to conjecture that some sort of control on the spectrum of the Jacobi operator (together with a volume bound) should indeed suffice to obtain compactness.

A first result of this flavour, that holds true up to (and including) ambient dimension seven, was recently proven by the third-named author.

\begin{theorem}\cite{Sha15}\label{thm:sha}
Let $N^{n+1}$ be a smooth, closed Riemannian manifold with $Ric_{N}>0$ and $2\leq n\leq 6$. Then given any $0<\Lambda<\infty$ and $I\in\mathbb{N}$ the class
\[
\mathcal{I}(\Lambda, I):=\left\{M\in\mathfrak{M}^{n}(N):\ \mathcal{H}^{n}(M)\leq\Lambda, \  index(M)\leq I \right\}
\]
is compact in the $C^{k}$ topology for all $k\geq 2$. 
\end{theorem}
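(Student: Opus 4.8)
\emph{Proof strategy.} The plan is to argue by sequential compactness. Given an arbitrary sequence $\{M_k\}\subset\mathcal{I}(\Lambda,I)$, I will extract a subsequence converging in $C^\infty$, hence in $C^k$ for every $k$, to some $M_\infty\in\mathcal{I}(\Lambda,I)$. Since the $M_k$ are minimal with uniformly bounded volume, the compactness theorem for stationary integral varifolds gives a subsequence (not relabelled) converging as varifolds to a stationary integral varifold $V$ with $\|V\|(N)\leq\Lambda$. The core task is to upgrade this to smooth, multiplicity-one convergence, which I would do through three ingredients: (i) a point-selection analysis producing uniform curvature bounds away from a set of at most $I$ points; (ii) removability of the resulting point singularities of the limit; (iii) ruling out higher multiplicity and, finally, any curvature concentration. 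Steps (i)--(ii) use only $n\leq 6$ and the index bound; $Ric_N>0$ enters in (iii).

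\emph{Step (i).} Let $\mathcal{Y}\subset N$ be the concentration set --- the points near which $\int|A_{M_k}|^{n}$ fails to be uniformly small. The key claims are that $\int_{M_k}|A_{M_k}|^{n}$ is uniformly bounded and that $\#\mathcal{Y}\leq I$, both proved by bubbling. Around a concentration point, rescaling at an almost-maximum of $|A_{M_k}|$ (a White-type point-selection) yields, by the resulting uniform curvature bound on larger and larger balls, a subsequential smooth limit that is a complete, embedded, \emph{non-flat} minimal hypersurface $\Sigma\subset\mathbb{R}^{n+1}$; the monotonicity formula together with the volume bound force $\Sigma$ to have Euclidean volume growth. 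Since $n\leq 6$, the Bernstein-type theorems for stable minimal hypersurfaces (\cite{SSY75}, \cite{SS81}) say such a stable $\Sigma$ is a hyperplane; hence $\Sigma$ is \emph{unstable}, and transplanting a compactly supported negative direction of its second variation onto $M_k$ produces, for $k$ large, a negative direction of the index form of $M_k$ localised near the concentration point. If $\mathcal{Y}$ had $I+1$ points these localised directions would have disjoint supports, so $index(M_k)\geq I+1$, a contradiction; hence $\#\mathcal{Y}\leq I$, and a similar count bounds $\int_{M_k}|A_{M_k}|^{n}$. Away from $\mathcal{Y}$ an $\varepsilon$-regularity theorem turns the smallness of $\int|A_{M_k}|^{n}$ into a pointwise bound on $|A_{M_k}|$, and elliptic estimates for the minimal surface equation then give, after a further subsequence, $C^\infty_{loc}$ convergence $M_k\to M_\infty$ on $N\setminus\mathcal{Y}$ to a smooth embedded minimal hypersurface, with some integer multiplicity $m\geq 1$; thus $V=m\,[\overline{M_\infty}]$, and the uniform positive lower volume bound for closed minimal hypersurfaces in $N$ gives $\overline{M_\infty}\neq\emptyset$.

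\emph{Steps (ii) and (iii).} For (ii): on annuli $B_\rho(y)\setminus B_\epsilon(y)$ around a point $y\in\mathcal{Y}$ and avoiding the rest of $\mathcal{Y}$, the total curvature of $M_k$ is small, so $M_k$ is stable there; letting $k\to\infty$ and then $\epsilon\to 0$, $M_\infty$ is a smooth \emph{stable} minimal hypersurface with bounded volume in the punctured ball $B_\rho(y)\setminus\{y\}$. Its tangent cones at $y$ are then stable minimal cones in $\mathbb{R}^{n+1}$, which for $n\leq 6$ must be hyperplanes (\cite{SSY75}, \cite{SS81}), so Allard's regularity theorem makes $\overline{M_\infty}$ smooth across $y$; thus $M_\infty:=\overline{M_\infty}$ is a closed, smooth, embedded minimal hypersurface. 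For (iii), suppose $m\geq 2$: near a regular point the $m$ sheets of $M_k$ are ordered normal graphs over $M_\infty$, and normalising the difference of the top and bottom sheets yields in the limit a nonnegative, nontrivial Jacobi field on $M_\infty\setminus\mathcal{Y}$; since $\mathcal{Y}$ has codimension $\geq 2$ in $M_\infty$ this extends to a positive Jacobi field on all of $M_\infty$, so $M_\infty$ is stable. If $M_\infty$ is two-sided this contradicts $Ric_N>0$ (test the second variation with the function $1$), and the one-sided case reduces to this via the orientation double cover; hence $m=1$. Finally, if some $y\in\mathcal{Y}$ survived, the bubble $\Sigma$ from Step (i) would be non-flat with Euclidean volume growth, and --- since a smooth minimal cone in $\mathbb{R}^{n+1}$ is a hyperplane --- strict monotonicity would give $\Theta_\Sigma(\infty)>1$, whereas the almost-monotonicity formula for $M_k$ between the blow-up scale and a fixed small scale forces $1=m\geq\Theta_\Sigma(\infty)$, a contradiction. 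Hence $\mathcal{Y}=\emptyset$, the convergence $M_k\to M_\infty$ is smooth of multiplicity one on all of $N$, the volume bound passes to the limit, and lower semicontinuity of the index under smooth convergence gives $index(M_\infty)\leq I$; so $M_\infty\in\mathcal{I}(\Lambda,I)$, completing the argument.

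\emph{Main obstacle.} The hardest part will be Step (i): organising the point-selection so that the blow-ups are at once non-flat and \emph{complete} with controlled Euclidean volume growth --- so that the stable-implies-flat dichotomy genuinely applies --- while keeping track of the index precisely enough to obtain the sharp bound $\#\mathcal{Y}\leq I$ rather than a weaker count. A secondary difficulty is the removable-singularity analysis in Step (ii), and in Step (iii) the careful handling of the one-sided case, where the nontriviality of the normal bundle must be accounted for.
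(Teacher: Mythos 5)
This theorem is quoted from \cite{Sha15} rather than reproved in the present paper, so there is no ``paper's own proof'' to compare against directly; however, your proposal faithfully reconstructs the strategy of Sharp's paper, which is also the backbone of the present paper's own argument for Theorem~\ref{thm:main} (especially the $p\geq 2$ case). The three-step architecture --- (i) point-selection and blow-up to produce at most $I$ concentration points, the key observation being that each blow-up $\Sigma$ is a complete, embedded, non-flat minimal hypersurface with Euclidean volume growth in $\mathbb{R}^{n+1}$, hence \emph{unstable} by Schoen--Simon, and transplanting a negative direction back to $M_k$ converts each concentration point into a unit of index; (ii) removability of the limit's singularities via stability on punctured balls, Simons' classification of stable cones for $n\leq 6$, and Allard regularity; (iii) ruling out multiplicity $\geq 2$ by constructing a positive Jacobi field (lifting to the double cover if $M_\infty$ is one-sided) and contradicting $Ric_N>0$, then showing $\mathcal{Y}=\emptyset$ once $m=1$ --- is exactly the right one, and the final use of lower semicontinuity of the Morse index under smooth multiplicity-one convergence is correct.

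Two small remarks on route. In Step (i), Sharp obtains the uniform curvature bound away from $\mathcal{Y}$ \emph{directly} from the point-selection: if $|A_{M_k}|$ blew up at some $z\notin\mathcal{Y}$ a fresh bubble would form there, enlarging $\mathcal{Y}$. Your detour through a bound on $\int_{M_k}|A_{M_k}|^n$ followed by an $\varepsilon$-regularity theorem is not wrong in principle, but the Choi--Schoen type $\varepsilon$-regularity for $n\geq 3$ is less standard in the literature than the pointwise argument, and the $L^n$-curvature bound is a corollary of the analysis (cf.\ Corollary~\ref{cor:3dscenario}) rather than an ingredient; I would drop this detour. In Step (iii), the extension of the Jacobi field across the finite set $\mathcal{Y}$ uses that the (normalised) field is uniformly bounded together with a Harnack-type argument to get a genuinely \emph{positive} (not merely nonnegative) Jacobi field --- worth stating explicitly, since this is where the Fischer-Colbrie--Schoen criterion applies. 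With these polished, the proposal is essentially Sharp's proof.
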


Here and above the word \textsl{compactness} is understood as single-sheeted graphical convergence to some limit $M\in\mathcal{I}(\Lambda, I)$.
As the reader can see, in analogy with Theorem \ref{thm:3dCS} here one does also need to assume positivity of the ambient Ricci curvature to derive a compactness theorem, for otherwise smooth, graphical convergence can only be ensured away from at most $I$ points (cmp. Theorem 2.3 in \cite{Sha15}).

In this article, we derive the rather surprising conclusion that \textsl{no assumption on the ambient manifold} is needed in proving a strong convergence theorem provided an upper bound on the Morse index is replaced by a lower bound on the first eigenvalue of the Jacobi operator. To state our result, we need to recall a definition: in the setting described above, we will say that $M_{k}\to M$ in the sense of smooth graphs at $p\in M$ if there exists $\rho>0, \eta>0$ such that \textsl{in normal coordinates centered at $p$} the intersection of $M_k$ with $B^{n}_{\rho}(0)\times B^{1}_{\eta}(0)$ consists, for $k$ large enough, of the collection of the graphs of smooth defining functions $u^1_k,\ldots, u^l_k$ with $u^j_{k}\to 0$ in $C^{m}$ for all $m\geq 2$ and $1\leq j\leq l$. We remark that if $M_k\to M$ in the sense of smooth graphs away from a finite set $\mathcal{Y}$ and $M$ is connected (and embedded) then the number of leaves of the convergence is constant.   

\begin{theorem}\label{thm:main}
	Let $2\leq n\leq 6$ and $N^{n+1}$ a smooth, closed Riemannian manifold. Denote by $\mathfrak{M}^n(N)$ the class of closed, smooth and embedded minimal hypersurfaces $M\subset N$. Let $\lambda_p(M)$ denote the $p$-th eigenvalue of the Jacobi operator for $M\in \mathfrak{M}^n(N)$. Given any $0<\Lambda <\infty$ and $0\leq  \mu <\infty $, define the class
	\[\mathcal{M}_p(\Lambda,\mu):=\{M\in \mathfrak{M}^n(N) : \text{$\mathcal{H}^{n}(M)\leq \Lambda$, $\lambda_p(M)\geq -\mu$}\}.
	\]
	Given a sequence $\left\{M_k\right\}\subset\mathcal{M}_{p}(\Lambda,\mu)$ there exists $M\in\mathcal{M}_{p}(\Lambda,\mu)$ such that $M_k\to M$ in the varifold sense and furthermore:
	\begin{enumerate}
	\item{if $p=1$ then $M_k\to M$ locally in the sense of smooth graphs;}
	\item{if $p\geq2$ then there exists a finite set $\mathcal{Y}=\left\{y_i\right\}_{i=1}^{P}$ with $P\leq p-1$ such that the convergence $M_k\to M$ is smooth and graphical for all $x\in M\setminus\mathcal{Y}$; if the number of leaves of the convergence is one then $\mathcal{Y}=\emptyset$.}	
	\end{enumerate}	
	
\end{theorem}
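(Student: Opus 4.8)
The plan is to run a concentration--compactness scheme in which the spectral hypothesis $\lambda_p\geq -\mu$ is converted, through a short variational argument, into an upper bound for the number of points of curvature concentration, while away from those points the classical regularity theory of (almost) stable minimal hypersurfaces takes over. Since $\mathcal H^n(M_k)\leq\Lambda$, Allard's compactness theorem gives a subsequence along which $M_k\to V$ as integral varifolds, $V$ stationary in $N$. For a geodesic ball $B\subset N$ call $M_k$ \emph{$\mu$-stable over $B$} if
\[
Q_{M_k}(\varphi):=\int_{M_k}\!\big(|\nabla\varphi|^2-(|A_{M_k}|^2+\operatorname{Ric}_N(\nu,\nu))\varphi^2\big)\ \geq\ -\mu\int_{M_k}\varphi^2\qquad\forall\,\varphi\in C_c^\infty(M_k\cap B).
\]
Fixing a countable basis $\{B^{(i)}\}$ of geodesic balls of $N$ and extracting diagonally, we may assume that for each $i$ the statement ``$M_k$ is $\mu$-stable over $B^{(i)}$'' has a well-defined eventual truth value, and we let $\mathcal Y$ be the set of points of $N$ lying in no $B^{(i)}$ for which that value is true. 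If $y_1,\dots,y_p$ were $p$ distinct points of $\mathcal Y$, pick pairwise disjoint basis balls $B^{(i_j)}\ni y_j$: for large $k$ none is $\mu$-stable, so there are $\varphi_j^k\in C_c^\infty(M_k\cap B^{(i_j)})$ with $Q_{M_k}(\varphi_j^k)<-\mu\|\varphi_j^k\|_{L^2}^2$; by disjointness of supports the span of the $\varphi_j^k$ is $p$--dimensional and $Q_{M_k}(\varphi)=\sum_j c_j^2 Q_{M_k}(\varphi_j^k)<-\mu\|\varphi\|^2$ on it, so $\lambda_p(M_k)<-\mu$, contradicting $M_k\in\mathcal M_p(\Lambda,\mu)$. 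Hence $P:=\#\mathcal Y\leq p-1$, and $P=0$ when $p=1$.

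Next I establish curvature estimates and smooth convergence on $N\setminus\mathcal Y$. If $y\notin\mathcal Y$ then $M_k$ is eventually $\mu$-stable over some basis ball $B\ni y$; by the monotonicity formula $\mathcal H^n(M_k\cap B_r)\leq C(N,\Lambda)r^n$, and combining this with the Michael--Simon inequality the $\mu$-stability estimate improves, on a smaller concentric ball, to $\int(|A_{M_k}|^2+\operatorname{Ric}_N(\nu,\nu))\varphi^2\leq\tfrac32\int|\nabla\varphi|^2$ --- genuine stability up to a fixed harmless factor. Since $2\leq n\leq 6$ and the $M_k$ are embedded, the Schoen--Simon curvature estimates \cite{SS81} (resting on \cite{SSY75}) give $\sup_{B''}|A_{M_k}|^2\leq C$ on a still smaller ball, uniformly in $k$. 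Standard elliptic theory, Arzel\`a--Ascoli and a final diagonal extraction over a countable subcover of $N\setminus\mathcal Y$ produce a subsequence along which $M_k\to M$ in the sense of smooth (multi-sheeted) graphs on $N\setminus\mathcal Y$, with $V$ an integer multiple of $M$ there; thus $M$ is smooth and embedded away from $\mathcal Y$, and connected since the $M_k$ are connected and converge to $V$ in Hausdorff distance. To cross the finite set $\mathcal Y$: any tangent cone $C$ of $V$ at a point of $\mathcal Y$ must be \emph{stable} --- otherwise an unstable test function on $C$, rescaled along a dyadic sequence of scales, would yield for $k$ large arbitrarily many disjointly supported functions on $M_k$ with $Q_{M_k}/\|\cdot\|^2<-\mu$, again forcing $\lambda_p(M_k)<-\mu$ --- and a stable minimal cone in $\mathbb R^{n+1}$ with $n\leq 6$ and embedded link is, by the Schoen--Simon estimate once more (its curvature is scale invariant, hence zero), a hyperplane. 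So $V$ has flat tangent cones everywhere, and by the regularity theory for stationary hypersurfaces with embedded regular set in these dimensions together with the strong maximum principle, $\operatorname{spt}V$ is a single smooth, closed, embedded minimal hypersurface $M$ with $V=m|M|$.

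It remains to check $M\in\mathcal M_p(\Lambda,\mu)$ and the multiplicity--one refinement. The area bound is immediate since $m\,\mathcal H^n(M)=\mathcal H^n(V)\leq\Lambda$. If $\lambda_p(M)<-\mu$, choose a $p$--dimensional $W\subset W^{1,2}(M)$ with $Q_M<-\mu\|\cdot\|^2$ on $W$; as $n\geq 2$ the finitely many points of $\mathcal Y$ carry zero $2$--capacity, so logarithmic cut-offs perturb $Q_M$ and $\|\cdot\|_{L^2}$ on $W$ negligibly, and lifting the cut-off functions to the $m$ sheets of $M_k$ over $M$ gives, for $k$ large and by $C^2$--convergence of the Jacobi operators, a subspace of $W^{1,2}(M_k)$ of dimension $mp\geq p$ on which $Q_{M_k}<-\mu\|\cdot\|^2$, whence $\lambda_p(M_k)<-\mu$, a contradiction; so $\lambda_p(M)\geq-\mu$ and $M\in\mathcal M_p(\Lambda,\mu)$. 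Together with the previous paragraph this gives (1) and the first part of (2). Finally, if the convergence has a single leaf ($m=1$) but $\mathcal Y\neq\emptyset$, pick $y\in\mathcal Y$: the curvature of $M_k$ must blow up near $y$ (otherwise small balls around $y$ would have positive first Dirichlet--Jacobi eigenvalue for $k$ large, so $y\notin\mathcal Y$), and rescaling $M_k$ about $y$ at the curvature scale produces a nonflat complete embedded minimal $\Sigma\subset\mathbb R^{n+1}$; but $m=1$ forces the density of $V$ at $y$ to be $1$, so by the monotonicity formula $\Theta(\Sigma,0,\infty)\leq 1$ and hence $\Sigma$ is a hyperplane --- a contradiction. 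This is the mechanism of \cite[Theorem~2.3]{Sha15} and completes (2).

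\emph{Expected main obstacle.} The technical heart is the regularity package of the middle paragraphs: transferring the global bound $\lambda_p\geq-\mu$ into enough \emph{local} (almost) stability to invoke the Schoen--Simon estimates when $n\leq 6$, and then ruling out genuine singularities of the limit via the tangent-cone dichotomy; the multiplicity--one improvement, which rests on the fine blow-up analysis of \cite{Sha15}, is the other delicate point. The variational counting of the first paragraph, by contrast, is short, and it is exactly what makes any hypothesis on the ambient manifold $N$ unnecessary.
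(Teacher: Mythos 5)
Your scheme is structurally the same as the paper's: convert the global bound $\lambda_p\geq-\mu$ into a local $\mu$-stability inequality outside at most $p-1$ bad points via a variational pigeonhole (this is the paper's Lemma 3.2), establish curvature estimates and smooth graphical convergence on the complement, show tangent cones at the bad points are stable hence flat when $n\leq 6$, and finally push the spectral bound to the limit $M$ by a zero-capacity cut-off. Where you diverge is in presentation rather than strategy: you treat $p=1$ as the degenerate case $P\leq 0$ of the general counting argument, whereas the paper proves $p=1$ first (by a global blow-up at the point of maximal curvature) and then reuses that blow-up locally for $p\geq 2$; your ``eventual truth value over a countable ball basis'' device replaces the paper's covering argument; and you argue tangent-cone stability directly by producing destabilizing test functions at dyadic scales, while the paper first proves $\lambda_1^M(B_{\varepsilon_0}(y)\setminus\{y\})\geq -\mu$ via nested annuli and then concludes stability of the cone. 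These are all reasonable variants.

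The one step you should not leave as stated is the passage ``monotonicity $+$ Michael--Simon improves $\mu$-stability to $\int(|A|^2+\operatorname{Ric})\varphi^2\leq\tfrac32\int|\nabla\varphi|^2$, then Schoen--Simon gives $\sup|A_{M_k}|\leq C$.'' The Schoen--Simon estimates of \cite{SS81} are formulated for the genuine stability inequality (constant $1$), and a factor of $\tfrac32$ (indeed any $1+\delta$) is not an input they accept verbatim; closing the argument still requires a blow-up in which the extra factor disappears, precisely because after rescaling by $|A_k(z_k)|\to\infty$ the term $\mu/|A_k(z_k)|^2$ vanishes and the limit is a \emph{genuinely} stable minimal hypersurface of Euclidean space, to which \cite{SS81} then applies. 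This is exactly how the paper obtains the curvature bound. So the Michael--Simon absorption is a dispensable detour: you still need the rescaling argument, and it already handles $\mu$ for free. The same remark applies to the tangent cones at $\mathcal{Y}$ --- the paper's nested-annuli lemma is exactly what replaces the absorption there. A smaller point: your lifting of the $p$-dimensional destabilizing subspace to the $m$ sheets of $M_k$ needs a word on why the $mp$ lifted sections (or even just $p$ of them) stay linearly independent for $k$ large; the paper handles this with an explicit Gram--Schmidt/orthogonality argument after cutting off near $\mathcal{Y}$, and something of that sort is needed in your version too. With those repairs your proof matches the paper's.
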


From such general assertion we can derive a strong compactness result under a purely topological assumption on the ambient manifold $N$.

\begin{corollary}
Let $2\leq n\leq 6$ and $N^{n+1}$ a smooth, closed Riemannian manifold not containing any one-sided minimal hypersurface (which holds true, for instance, if $N$ is simply connected). Then given any $0<\Lambda<\infty$ and $0\leq\mu<\infty$ the class $\mathcal{M}_1(\Lambda,\mu)$ is compact in the $C^{k}$ topology for all $k\geq 2$. 
\end{corollary}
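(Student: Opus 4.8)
The plan is to deduce the Corollary directly from part~(1) of Theorem~\ref{thm:main}: the only additional input is that the topological hypothesis on $N$ forces the limit hypersurface to be two-sided, and two-sidedness together with the connectedness of the $M_k$ upgrades the local smooth graphical convergence to one with a single leaf, which is precisely single-sheeted $C^k$ convergence.

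First I would fix a sequence $\{M_k\}\subset\mathcal{M}_1(\Lambda,\mu)$ and invoke Theorem~\ref{thm:main}, which (after passing to a subsequence if necessary) yields $M\in\mathcal{M}_1(\Lambda,\mu)$ with $M_k\to M$ in the varifold sense and, since $p=1$, locally in the sense of smooth graphs. As $M\in\mathfrak{M}^n(N)$ is a closed embedded minimal hypersurface, the hypothesis on $N$ guarantees that $M$ is two-sided; I would then fix a global unit normal $\nu$ along $M$ together with a tubular neighbourhood $U\cong M\times(-\varepsilon,\varepsilon)$ of $M$ in $N$.

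Next I would globalise the graphical description. Covering the compact manifold $M$ by finitely many of the coordinate neighbourhoods appearing in the definition of convergence in the sense of smooth graphs, the numbers of sheets agree on overlaps, so --- as recorded in the Introduction for connected embedded limits, here with empty exceptional set --- there is an integer $l\geq 1$ such that for all large $k$ one has $M_k\subset U$ and $M_k$ is the union of $l$ graphs $G^1_k,\dots,G^l_k$ over $M$, with $G^j_k=\{(x,u^j_k(x)):x\in M\}$ in the coordinates on $U$ and $u^j_k\to 0$ in $C^m(M)$ for every $m\geq 2$. Because $M_k$ is embedded, at each $x\in M$ the values $u^1_k(x),\dots,u^l_k(x)$ are pairwise distinct, hence (the base $M$ being connected) globally ordered, so the $G^j_k$ are pairwise disjoint --- alternatively this is immediate from the strong maximum principle for minimal hypersurfaces. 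Each $G^j_k$ is diffeomorphic to $M$ via the projection $U\to M$, hence connected, so $M_k=\bigsqcup_{j=1}^{l}G^j_k$ is a disjoint union of $l$ nonempty connected hypersurfaces; since $M_k$ is connected, $l=1$.

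It then follows that for large $k$ we have $M_k=\{(x,u^1_k(x)):x\in M\}$ with $u^1_k\to 0$ in every $C^m$ norm, i.e.\ $M_k\to M$ as a single-sheeted smooth graph, and the limit lies in $\mathcal{M}_1(\Lambda,\mu)$; this gives compactness of $\mathcal{M}_1(\Lambda,\mu)$ in the $C^k$ topology for every $k\geq 2$. For the parenthetical assertion I would note that if $N$ is simply connected then $H^1(N;\mathbb{Z}/2)=0$, so for any closed connected embedded hypersurface $M\subset N$ the Thom isomorphism (with $\mathbb{Z}/2$ coefficients) and excision, fed into the long exact sequence of the pair $(N,N\setminus M)$, force $N\setminus M$ to be disconnected and hence $M$ to be two-sided; in particular $N$ contains no one-sided minimal hypersurface. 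I do not expect a genuine obstacle here: the only mildly delicate point is the patching in the third step --- that the a priori locally defined sheet counts really glue to a single integer $l$ --- but this is exactly the constancy of the number of leaves already noted in the Introduction.
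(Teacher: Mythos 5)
Your proof is correct and follows essentially the same route as the paper's one-sentence argument: the limit $M$ produced by Theorem \ref{thm:main}(1) is two-sided by the hypothesis on $N$, so its trivial tubular neighbourhood $M\times(-\varepsilon,\varepsilon)$ lets one order the sheets of $M_k$ globally by height, giving a decomposition of $M_k$ into $l$ pairwise disjoint closed graphs over $M$, and connectedness of $M_k$ then forces $l=1$. The paper compresses this to ``for $k$ large enough $M_k$ is a finite covering thereof, hence the conclusion comes via an elementary topological argument due to the connectedness assumption,'' which is precisely the sheet-ordering argument you spell out; your homological justification of the parenthetical (via the Thom isomorphism and the exact sequence of the pair) is also sound, though it can be shortened to the standard observation that a one-sided hypersurface admits a loop with odd $\mathbb{Z}/2$-intersection number against $M$, impossible when $H_1(N;\mathbb{Z}/2)=0$.
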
	

Indeed, in such scenario it is of course the case that the limit hypersurface $M$ (whose existence is ensured by Theorem \ref{thm:main}) is itself two-sided and thus for $k$ large enough $M_k$ is a finite covering thereof, hence the conclusion comes via an elementary topological argument due to the connectedness assumption on $M_k$.

In particular, we deduce from this statement that (in presence of a volume bound) the Morse index of an element in $\mathcal{M}_1(\Lambda,\mu)$ is uniformly bounded from above, which seems a rather unexpected conclusion from a purely analytic viewpoint, as we are considering an infinite family of elliptic operators parametrized by minimal hypersurfaces in $\mathcal{M}_1(\Lambda,\mu)$. On the other hand, we know that a bound on index does not give a bound on $\lambda_1$ (since even with bounded index we could have non-smooth convergence: to see this one only needs to consider a sequence of catenoids $M_{k}=\frac{1}{k}M$ in $\mathbb{R}^n$, all centred at the origin, and blowing down to a double plane. One gets that eventually the catenoid has index one in the unit ball (when it is scaled down sufficiently far, and only considering compactly supported variations), moreover that $\lambda_1 (M_k \cap B_1(0))\to -\infty$.) In particular, given a sequence with only volume and index bounds, we must have that $\lambda_1 \to -\infty$ in general - moreover the gaps between the eigenvalues must also diverge (since the index is bounded). 

Combining Theorem \ref{thm:main} with Remark \ref{rem:ricpos} we deduce the following:

\begin{corollary}
	Let $N^{n+1}$ be a smooth, closed Riemannian manifold with $Ric_{N}>0$ and $2\leq n\leq 6$. Then given any $0<\Lambda<\infty$ and $0\leq\mu<\infty$ and $p\geq 1$ the class $\mathcal{M}_p(\Lambda,\mu)$ is compact in the $C^{k}$ topology for all $k\geq 2$. 
\end{corollary}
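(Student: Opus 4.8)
The plan is to feed an arbitrary sequence $\{M_k\}\subset\mathcal{M}_p(\Lambda,\mu)$ into Theorem \ref{thm:main} and then use $Ric_N>0$ to remove, in one stroke, both multi-sheeted convergence and the exceptional set $\mathcal{Y}$. First I would invoke Theorem \ref{thm:main} to obtain $M\in\mathcal{M}_p(\Lambda,\mu)$ with $M_k\to M$ as varifolds, the convergence being smooth and graphical on $M\setminus\mathcal{Y}$ for a finite set $\mathcal{Y}$ with $\#\mathcal{Y}\leq p-1$ (empty when $p=1$). Let $l\geq1$ denote the globally constant number of leaves. The statement reduces to $l=1$: once this is known, the final clause of Theorem \ref{thm:main}(2) forces $\mathcal{Y}=\emptyset$ also for $p\geq2$, so in every case $M_k\to M$ is a single-sheeted smooth graphical convergence over all of $M$; unwinding the definition, the normal defining function tends to $0$ in each $C^m$, which is exactly $M_k\to M$ in $C^k$ for every $k\geq2$. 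As the sequence was arbitrary and $M\in\mathcal{M}_p(\Lambda,\mu)$, this is $C^k$-compactness of the class.

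It remains to prove $l=1$, which is the substance of Remark \ref{rem:ricpos}, and I would argue by contradiction. Suppose $l\geq2$. Over any open set relatively compact in $M\setminus\mathcal{Y}$, $M_k$ is for $k$ large the disjoint union of the graphs of functions $u^1_k<\cdots<u^l_k$ with $u^j_k\to0$ in $C^m$; by the strong maximum principle $w_k:=u^2_k-u^1_k>0$, and $w_k$ solves a linear elliptic equation whose coefficients converge to those of the Jacobi operator $L_M=\Delta_M+|A_M|^2+Ric_N(\nu,\nu)$. Normalising $w_k$ to equal $1$ at a fixed point of $M\setminus\mathcal{Y}$ and applying the Harnack inequality, one extracts a subsequential limit $w>0$ on $M\setminus\mathcal{Y}$ with $L_Mw=0$; when $M$ is one-sided the identical bookkeeping, performed on the orientation double cover $\hat M$ (a closed, two-sided, immersed minimal hypersurface carrying the same positive potential $|A|^2+Ric_N(\nu,\nu)$), produces a positive even solution there. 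Since $\mathcal{Y}$ is finite and $n\geq2$, a removable-singularities argument upgrades $w$ to a positive solution of $L_Mw=0$ on the whole (closed, two-sided) hypersurface, which is therefore stable. But a closed two-sided minimal hypersurface cannot be stable when $Ric_N>0$: evaluating the index form at the constant function $1$ yields $-\int(|A|^2+Ric_N(\nu,\nu))<0$. This contradiction proves $l=1$.

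I expect the main obstacle to be precisely the production of a \emph{global} positive Jacobi field — in particular the removable-singularities step at the points of $\mathcal{Y}$, where a priori $w$ could blow up like a Green's function. I would handle this by combining the uniform control away from the concentration set that is built into the proof of Theorem \ref{thm:main} with a capacity (or barrier) argument near each $y_i$; the Harnack estimate for the normalised differences, the convergence of the linearised operators, and the sign computation under $Ric_N>0$ are all standard. It is worth emphasising that, unlike the earlier corollary, no orientability or simple-connectedness assumption on $N$ is needed here: the positivity of the Ricci curvature directly annihilates the (possibly one-sided) stable limit that higher-multiplicity convergence would otherwise manufacture, and the restriction $n\leq6$ enters only through Theorem \ref{thm:main}.
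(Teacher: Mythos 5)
Your proposal is correct and follows the paper's approach: the corollary is precisely Theorem \ref{thm:main} plus Remark \ref{rem:ricpos}, whose content -- that multi-sheeted convergence under $Ric_N>0$ is impossible because it manufactures a positive Jacobi field and hence a stable two-sided limit (or double cover), contradicting $Q(1,1)=-\int_M(|A|^2+Ric_N(\nu,\nu))<0$ -- is exactly what you reconstruct, after which the final clause of Theorem \ref{thm:main}(2) forces $\mathcal{Y}=\emptyset$ and gives $C^k$ compactness. On the one concern you flag, you do not actually need the singularity of $w$ to be removable: the Fischer--Colbrie--Schoen criterion already yields $Q(\phi,\phi)\geq 0$ for $\phi$ compactly supported in $M\setminus\mathcal{Y}$ from the positive solution $w$ there, and since a finite set has zero $W^{1,2}$-capacity when $n\geq 2$ this stability inequality passes to all of $W^{1,2}(M)$ without ever extending $w$ across $\mathcal{Y}$ -- which is how the cited passage of \cite{Sha15} handles it.
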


From a different perspective, Theorem \ref{thm:main} gives us an interesting description of \textsl{what goes wrong} when absence of a smooth limit occurs for a family $\left\{M_k\right\}$ satisfying a uniform volume bound: necessarily, \textsl{every} eigenvalue of the Jacobi operator has to diverge to $-\infty$, which somehow captures the well-known picture of $M_{k}$ exhibiting some neck-pinching around finitely many points.

\begin{corollary}\label{cor:sing}
Let $\left\{M_k\right\}\subset\mathfrak{M}^{n}(N)$ be a sequence satisfying a uniform volume bound, so that possibly by extracting a subsequence we know \cite{Sim83} that $M_k\to M$ for some stationary, integral varifold $M$ in $N$. 
\begin{enumerate}
\item{If $M$ is not smooth, then $\lambda_p(M_k)\to-\infty$ as $k\to\infty$ for every $p\geq 1$.}
\item{If $M$ has multiplicity greater than one, then $\lambda_p(M_k)\to-\infty$ as $k\to\infty$ for every $p\geq 1$ provided $Ric_N>0$.}
\item{If we denote by $\mathcal{Y}$ the set of points of $M$ where the convergence $M_k\to M$ is not smooth and graphical, then $\lambda_p(M_k)\to-\infty$ for all $1\leq p\leq |\mathcal{Y}|$. In particular, if $|\mathcal{Y}|=\infty$ then $\lambda_p(M_k)\to-\infty$ for all $p\geq 1$.}	
\end{enumerate}
\end{corollary}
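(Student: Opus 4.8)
The plan is to prove all three items by contraposition: in each case I assume that some Jacobi eigenvalue $\lambda_p(M_k)$ does \emph{not} diverge to $-\infty$, feed this into Theorem \ref{thm:main}, and contradict the assumed degeneration of $M$; throughout I use the uniqueness of varifold limits and the monotonicity $\lambda_1(M_k)\le\lambda_2(M_k)\le\cdots$. For item~(1): if $\lambda_p(M_k)\not\to-\infty$ for some $p\ge1$, then after passing to a subsequence $\lambda_p(M_k)\ge-\mu$ for some finite $\mu$, so $\{M_k\}\subset\mathcal{M}_p(\Lambda,\mu)$. By Theorem \ref{thm:main} there is $M'\in\mathcal{M}_p(\Lambda,\mu)$ --- in particular a \emph{smooth} embedded minimal hypersurface --- with $M_k\to M'$ as varifolds; since the varifold limit is unique, $\mathrm{spt}(M)=M'$ is smooth, contradicting the hypothesis. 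Hence $\lambda_p(M_k)\to-\infty$ for every $p\ge1$.

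For item~(2): arguing again by contradiction, extract a subsequence lying in some $\mathcal{M}_p(\Lambda,\mu)$ and apply Theorem \ref{thm:main} together with Remark \ref{rem:ricpos}; under the hypothesis $Ric_N>0$ the convergence $M_k\to M'$ is necessarily single-sheeted. (In the model two-sided case the reason is classical: a multi-sheeted convergence produces, on ordering two consecutive sheets as graphs $u^1_k<u^2_k$ over $M'$ and normalising the positive difference $u^2_k-u^1_k$, a nontrivial nonnegative solution of the Jacobi equation on $M'$ away from a finite set --- which, having codimension at least two since $n\ge2$, is removable --- hence a strictly positive Jacobi field on all of $M'$ by Harnack, so $\lambda_1(M')\ge0$; but the stability quadratic form evaluated on the constant $1$ equals $-\int_{M'}\big(|A_{M'}|^2+Ric_N(\nu,\nu)\big)<0$, so $\lambda_1(M')<0$ --- a contradiction; the one-sided case is analogous via the orientation double cover.) Therefore the varifold limit $M$ equals $M'$ with multiplicity one, contradicting the hypothesis, and again $\lambda_p(M_k)\to-\infty$ for all $p\ge1$.

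For item~(3): by eigenvalue monotonicity it suffices to show that, for every finite $P$, having $P$ distinct points in $\mathcal Y$ forces $\lambda_P(M_k)\to-\infty$ (the case $|\mathcal Y|=\infty$ then follows by letting $P\to\infty$). Since the exceptional set of a subsequence is always contained in that of the full sequence, I first replace $\{M_k\}$ by a subsequence whose exceptional set $\mathcal Y$ has least cardinality; then $\mathcal Y$ is stable under any further extraction (a further subsequence has exceptional set $\subseteq\mathcal Y$ with at least $|\mathcal Y|$ points, hence equal to $\mathcal Y$). If this least cardinality is infinite then $|\mathcal Y|=\infty$, and for any $p$ the failure of $\lambda_p(M_k)\to-\infty$ would produce, via Theorem \ref{thm:main}, a subsequence with finite exceptional set --- impossible; so $\lambda_p(M_k)\to-\infty$ for all $p$. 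Otherwise $|\mathcal Y|$ is finite; suppose $|\mathcal Y|\ge P$ but $\lambda_P(M_k)\not\to-\infty$. Extracting a subsequence with $\lambda_P(M_k)\ge-\mu$ gives $\{M_k\}\subset\mathcal{M}_P(\Lambda,\mu)$ with exceptional set still equal to $\mathcal Y$ by stability, so with at least $P$ points --- contradicting Theorem \ref{thm:main}(2), which bounds the exceptional set of a sequence in $\mathcal{M}_P(\Lambda,\mu)$ by $P-1$ points. Hence $\lambda_P(M_k)\to-\infty$.

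The main obstacle is exactly this reconciliation in item~(3): a priori the set of non-graphical points can shrink under passage to subsequences, so one must make $\mathcal Y$ robust under the very extraction that renders $\lambda_P$ bounded below, which is what the ``least cardinality'' normalisation achieves (and, as is customary in such compactness statements, $\mathcal Y$ and the conclusions are understood relative to the subsequence one works with). Everything else reduces to a direct application of Theorem \ref{thm:main} and the uniqueness of varifold limits, together with the standard removable-singularity and Harnack facts used for the limiting Jacobi field in item~(2).
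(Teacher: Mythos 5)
Your treatment of items (1) and (2) is correct and is the natural argument: negate the conclusion, extract a subsequence lying in $\mathcal{M}_p(\Lambda,\mu)$, invoke Theorem \ref{thm:main} and uniqueness of varifold limits; for item (2) you supplement this with the observation from Remark \ref{rem:ricpos} that multi-sheeted convergence under $Ric_N>0$ forces the limit to be two-sided and stable, which is impossible since testing the Jacobi form with the constant section gives a negative value when $Ric_N>0$. No issues there.

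For item (3) your ``least-cardinality'' normalization does not prove the corollary as stated; it proves a genuinely different statement. After the renaming, the set you call $\mathcal{Y}$ can be strictly smaller than the exceptional set of the given sequence. Concretely, if $\{M_k\}$ interleaves one subsequence degenerating at $P$ points with another converging smoothly everywhere (both to the same varifold), then the exceptional set of the full sequence has $P$ points but your minimal subsequence has empty exceptional set, so your argument yields nothing, whereas the corollary asserts $\lambda_p(M_k)\to-\infty$ for all $p\leq P$. And even when the exceptional set is subsequence-stable (so the normalization is harmless), you only obtain the conclusion along the minimal subsequence rather than along the given sequence; in that stable case the cleaner argument is direct and does not require the normalization at all: if some subsequence had $\lambda_P\geq-\mu$, Theorem \ref{thm:main}(2) would bound its exceptional set by $P-1$, contradicting subsequence-stability of $\mathcal{Y}$ with $|\mathcal{Y}|\geq P$, so no such subsequence exists and hence $\lambda_P(M_k)\to-\infty$ along the whole sequence. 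The subtlety you flagged is real, but your fix quietly replaces $\mathcal{Y}$ by a possibly much smaller set and restricts to a subsequence, so you should either (i) state explicitly that you are proving the corollary under the convention that $\mathcal{Y}$ is understood to be stable under passage to subsequences (in which case drop the normalization and argue directly as above), or (ii) acknowledge that the literal statement may fail for mixed sequences and that what you establish is the sharp version. An alternative route, closer to the internal machinery of the paper, is to pick $y_1,\ldots,y_P\in\mathcal{Y}$, disjoint balls $B_r(y_i)$, show via the blow-up argument that $\lambda_1^{M_k}(B_r(y_i))\to-\infty$ for each $i$, and then conclude by the min-max estimate underlying Lemma \ref{lem:parti}; this exposes exactly the same subsequence issue, so it is not a loophole, but it is worth mentioning since it avoids any reference to the structure of Theorem \ref{thm:main}(2) and reduces the claim to localized first-eigenvalue blow-up.
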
	

For instance: this applies to the aforementioned Hsiang minimal hyperspheres \cite{Hsi83} (since they have a non-smooth limit) or more generally to the hyperspheres produced in \cite{Car15}. We further remark that the positivity assumption on the Ricci curvature of $N$ in item (2) above is essential because of the example described in Remark \ref{rem:mult2}. 

When $n=2$, the scenario we obtain by combining Theorem \ref{thm:3dCS} with Theorem \ref{thm:sha} and Theorem \ref{thm:main} is rather enlightening.

\begin{corollary}\label{cor:3dscenario}
	Let $\mathfrak{C}^{n}\subset\mathfrak{M}^{n}$ be a subclass of closed minimal hypersurfaces inside some smooth closed Riemannian manifold $N^{n+1}$ of dimension $2\leq n\leq 6$ satisfying $Ric_N >0$. Then a uniform bound on any one of the following quantities for every $M\in\mathfrak{C}^{n}$ leads to a bound on the rest of them for every $M\in\mathfrak{C}^{n}$:
	\begin{itemize}
		\item the genus of $M$ (when $n=2$)
		\item $index(M) + \mathcal{H}^n(M)$
		\item $\lambda_p(M) + \mathcal{H}^n(M)$
		\item $\sup_M |A| + \mathcal{H}^n(M)$
		\item $\int_M |A|^n+\mathcal{H}^n(M)$. 
	\end{itemize}
	\end{corollary}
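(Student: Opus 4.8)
The plan is to establish all the implications via a web of references to the three compactness theorems already at our disposal, together with classical estimates for minimal hypersurfaces, working always inside the fixed class $\mathfrak{C}^n$ where $Ric_N>0$ and all hypersurfaces are closed, smooth and embedded. We organise the equivalences around the ``master'' quantity $\sup_M|A| + \mathcal{H}^n(M)$, since a uniform bound on this is precisely what yields $C^k$-subconvergence (hence full $C^k$-compactness of $\mathfrak{C}^n$) by elliptic estimates and the Arzel\`a--Ascoli theorem applied to the graphical defining functions in a uniform finite cover, while conversely $C^k$-compactness trivially bounds all the listed quantities since each is continuous with respect to $C^k$ (in fact $C^2$) convergence and $\mathfrak{C}^n$ would then be sequentially compact.

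\smallskip

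First I would record the easy direction: a bound on $\sup_M|A|+\mathcal{H}^n(M)$ bounds each of the other four quantities. For the genus (when $n=2$) this follows from the Gauss equation, which writes the Gauss curvature of $M$ in terms of $|A|^2$ and the ambient curvature of $N$, so that the Gauss--Bonnet integral $\int_M K\,d\mathcal H^2 = 2\pi\chi(M)$ is controlled, giving a bound on $|\chi(M)|$ and hence on the genus of the connected surface $M$. For $\int_M|A|^n$ the bound is immediate from $\int_M|A|^n \le (\sup_M|A|)^n\,\mathcal H^n(M)$. For $\operatorname{index}(M)$ one uses that, by the work recalled in the introduction, a uniform bound on the total curvature $\int_M|A|^n$ (equivalently here on $\sup_M|A|$ and volume) controls the Morse index of the Jacobi operator $L = \Delta + |A|^2 + \operatorname{Ric}_N(\nu,\nu)$, since the number of negative eigenvalues is estimated by the $L^{n/2}$-norm of the potential $|A|^2 + \operatorname{Ric}_N(\nu,\nu)$, which is bounded using the volume bound, the curvature bound on $N$, and H\"older. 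Finally a bound on $\operatorname{index}(M)$ forces a lower bound on $\lambda_p(M)$ for every fixed $p$ greater than the index: once $\operatorname{index}(M)\le I$ we have $\lambda_{I+1}(M)\ge 0$, so in particular for a uniformly index-bounded family $\lambda_p(M)$ is bounded below for all $p$; but since the statement allows us to choose which $p$, and since $\operatorname{index}$ and volume bounded implies (by Theorem~\ref{thm:sha}) $C^k$-compactness, $\lambda_p$ is automatically bounded for every $p$ along the limit. Conversely a lower bound on $\lambda_p(M)$ together with the volume bound, via Theorem~\ref{thm:main} and Remark~\ref{rem:ricpos} (i.e.\ Corollary~3 in the excerpt), gives $C^k$-compactness of the class, hence bounds on everything including $\sup_M|A|$.

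\smallskip

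The genuinely interesting implications are the ``hard'' ones, where we start from a bound on a \emph{weak} quantity --- the genus (for $n=2$), the index, the total curvature $\int_M|A|^n$, or $\lambda_p$ --- and must produce the \emph{strong} conclusion $\sup_M|A|\le C$. Each of these is dispatched by invoking the appropriate compactness theorem to upgrade the weak bound to $C^k$-subconvergence: a genus bound (with $Ric_N>0$, $n=2$) gives an area bound by the argument of Choi--Wang recalled in the introduction and then $C^k$-compactness by Theorem~\ref{thm:3dCS}; an index-plus-volume bound gives $C^k$-compactness by Theorem~\ref{thm:sha}; a $\lambda_p$-plus-volume bound gives it by Theorem~\ref{thm:main} combined with $Ric_N>0$ (Corollary~3); and a bound on $\int_M|A|^n$ plus volume bounds the index (as above) and again reduces to Theorem~\ref{thm:sha}. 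In each case, once $\mathfrak C^n$ is $C^k$-compact it is in particular $C^2$-precompact, so $\sup_M|A|$ cannot fail to be bounded along any sequence --- if it did, extracting a $C^2$-convergent subsequence would force the limiting second fundamental form to have infinite sup, a contradiction. Then the easy direction closes the loop and bounds all remaining quantities. The main obstacle is purely bookkeeping: making sure each ``weak quantity $\Rightarrow$ volume bound'' step is legitimate (only the genus case needs an extra argument, namely Choi--Wang; the other three already carry a volume bound by hypothesis) and that the hypothesis $Ric_N>0$ is correctly threaded through the uses of Theorem~\ref{thm:main} and the index estimate. No new analysis is required beyond what the cited theorems already provide.
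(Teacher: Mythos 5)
Your overall architecture --- route everything through the ``hub'' quantity $\sup_M|A|+\mathcal{H}^n(M)$, which yields $C^k$-precompactness and hence bounds on every geometric/spectral quantity, and then show each of the other four bounds feeds into this hub --- is sensible and matches what the theorems in the paper are built for. The genus, index and $\lambda_p$ branches are handled correctly (Choi--Wang plus Theorem~\ref{thm:3dCS}; Theorem~\ref{thm:sha}; Theorem~\ref{thm:main} plus Remark~\ref{rem:ricpos}), and the ``easy'' direction from a curvature bound to the remaining quantities is fine.

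The gap is in the $\int_M|A|^n$ branch. You assert that ``by the work recalled in the introduction'' the number of negative eigenvalues of the Jacobi operator is controlled by the $L^{n/2}$-norm of the potential. No such result is recalled in this paper, and it is not among the cited references; you are silently importing a Cwikel--Lieb--Rozenblum/Cheng--Tysk type estimate. More seriously, that estimate is simply false when $n=2$: on a surface, a Schr\"odinger operator $-\Delta-W$ can have arbitrarily many negative eigenvalues while $\int W$ stays fixed (CLR bounds fail in dimension two), so ``$\int_M|A|^2$ plus area bounded $\Rightarrow$ index bounded'' is not a legitimate step, and $n=2$ is within the range of the corollary. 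The repair is easy and stays inside the paper's toolkit. For $n=2$, use the Gauss equation and Gauss--Bonnet to convert a bound on $\int_M|A|^2+\mathcal{H}^2(M)$ into a genus bound, and then run your genus branch. Alternatively, and uniformly for all $2\le n\le 6$, argue directly: a bound on $\int_M|A|^n+\mathcal{H}^n(M)$ gives, via the $\varepsilon$-regularity/Choi--Schoen-type estimate, a curvature bound away from at most finitely many concentration points, hence smooth graphical subconvergence away from a finite set to an integral varifold with smooth support (removable singularity/Allard); then $Ric_N>0$ and the argument of Remark~\ref{rem:ricpos} exclude multiplicity $\ge 2$ and exclude a stable two-sided or one-sided limit, forcing single-sheeted convergence, so Allard's theorem upgrades to smooth convergence everywhere and $\sup_M|A|$ is bounded by contradiction. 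This avoids any CLR-type input and closes the loop.
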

	
Lastly, our main theorem extends (with minor variations in the proof) to the case when $N$ is a complete (not necessarily compact) Riemannian manifold, provided one replaces, in the statement, the set $\mathcal{M}_p(\Lambda,\mu)$ by the set
\[
\mathcal{M}_p^{\Omega}(\Lambda,\mu):=\{M\in \mathfrak{M}^n(N) : M\subset\overline{\Omega},  \ \text{$\mathcal{H}^{n}(M)\leq \Lambda$, \ $\lambda_p(M)\geq -\mu$}\}
\]
for some open, bounded domain $\Omega$.
In various situations of great geometric interest one can in fact drop the requirement that the minimal hypersurfaces in questions are contained in a given, bounded domain.

\begin{remark}
The conclusions of \ref{thm:main} also hold true when
\begin{enumerate}
\item{$(N^{n+1},g)$ is a compact Riemannian manifold with mean-convex boundary;}
\item{$(N^{n+1},g)$ is a complete Riemannian manifold such that for some compact set $K$ each component of $M\setminus K$ is foliated by closed, mean-convex leaves (in particular: asymptotically flat, asymptotically cylindrical and asymptotically hyperbolic manifolds).}	
\end{enumerate}	
The proof follows along the same lines of Theorem \ref{thm:main},  modulo exploiting a geometric maximum principle (see, for instance, \cite{SW89}) in order to reduce our compactness analysis to a bounded domain of the manifold $N$.
\end{remark}

\section{Preliminaries}\label{sec:prelim}

We shall recall here the definition of the Morse index and the Jacobi eigenvalues $\lambda_p$ for general smooth minimal  hypersurfaces $M\hookrightarrow N$. First of all, if $M$ is orientable then the second variation of the area functional can be written down purely in terms of section of the normal bundle $v\in \Gamma(NM)$ by
\[Q(v,v):=\int_{M} |\nabla^\bot v|^2 - |A|^2|v|^2 -Ric_N(v,v).
\]
Standard results on the spectra of compact self-adjoint operators on separable Hilbert spaces tell us that there is an orthonormal basis $\{v_i\}_{i=1}^{\infty}$ of $L^2(\Gamma(TN))$ consisting of eigenfunctions for the operator
\[
L^{\bot}v:=\Delta^{\bot} v +|A|^2 v + Ric^{\bot}_N(v)
\]
 with associated eigenvalues $\{\lambda_i\}_{i=1}^{\infty}$ of $Q$.
Moreover, we have the following \textsl{Rayleigh characterization of the eigenvalues} due to R. Courant:
\[ \lambda_{k} := \inf_{\textrm{dim}(V)=k}\max_{v\in V} \frac{Q(v,v)}{\int_M |v|^2}
\]
where of course $V$ is a linear subspace of $\Gamma(NM)$.  

Now, if $M$ is non-orientable then we simply lift the problem to its orientable double cover $\tilde{M}$ via $\pi :\tilde{M}\to M$. Consider the linear subspace of smooth sections $v\in\Gamma(\pi^{\ast}NM)$ such that $v\circ \tau =v$ where $\tau:\tilde{M}\to \tilde{M}$ is the unique deck transformation of $\pi$ which reverses orientation. Denote this subspace by $\tilde{\Gamma}(\pi^\ast NM)$. We can also pull back the quantities $|A(x)|^2:= |A(\pi(x))|^2$ and $Ric_N(a,b) := Ric_N (\pi_\ast a, \pi_\ast b)$. Thus consider the quadratic form
\[\tilde{Q}(v,v):=\int_{\tilde{M}} |\nabla^\bot v|^2 - |A|^2|v|^2 -Ric_N(v,v) 
\]
over $\tilde{\Gamma}(\pi^\ast NM)$. As before we can define the spectrum, and therefore index of $M$ to be that of $\tilde{M}$ with respect to $\tilde{Q}$ and $\tilde{\Gamma}(\pi^\ast NM)$.

\

If the ambient manifold $N$ is orientable, a closed hypersurface is orientable if and only if it is two-sided, namely if there exists a global section $\nu=\nu_M$ of its normal bundle inside  $TN$. If this is the case (namely if $M\hookrightarrow N$ is minimal and two-sided) the spectrum defined above patently coincides with the spectrum  of the \textsl{scalar} Jacobi or stability operator of $M$, namely 
\[
Lu:=\Delta_M u + (Ric_N(\nu,\nu)+|A|^2)u
\]
 when we regard $L:W^{1,2}(M)\to W^{-1,2}(M)$.
\

Furthermore, we shall introduce the following notation: given a minimal hypersurface $M$ in the Riemannian manifold $(N,g)$ and a bounded open domain $\Omega\subset M$ we shall set
\[
\lambda^{M}_1(\Omega)=\inf\left\{-\int_{M}v L^{\bot}v \ | v\in C_0^{\infty}(\Omega; NM) \ \textrm{and} \ \int_{M}|v|^2=1 \right\}
\]  
where $C_0^{\infty}(\Omega; NM) $ denotes the (smooth) sections of the normal bundle whose support, projected on the base $M$ is relatively compact in $\Omega$.

\section{Proofs}

For the sake of conceptual clarity, we will separate the proof of Theorem \ref{thm:main} in the case $p=1$ and $p\geq 2$, the former being a building block for the latter.

\begin{proof}[Proof of Theorem \ref{thm:main}, case $p=1$.]
	
Let $\left\{M_k\right\}\subset\mathcal{M}_1(\Lambda,\mu)$ be a sequence of closed, embedded minimal hypersurfaces satisfying our bounds on the volume and first Jacobi eigenvalue: we claim the existence of a constant $C=C(N,\Lambda,\mu)>0$ such that
\[
\sup_{k\geq 1} \sup_{z\in M_k}|A_k(z)|\leq C
\]
where $A_k$ denotes the second fundamental form of $M_k$ in $(N,g)$. For the sake of a contradiction, let us assume instead that such a uniform curvature bound does not hold. Then,  we could find (for every $k\geq 1$) a sequence of points $\left\{z_k\right\}\subset N$ such that $A_k$ attains its maximum value at $z_k\in M_k$ and, furthermore, $\lim_{k\to\infty}|A_k(z_k)|=+\infty$. Thanks to the compactness of the ambient manifold $N$, possibly by extracting a subsequence (which we shall not rename) we can assume that $z_k\to y$ for some point $y\in N$.
Let us pick, once and for all, a small radius $r_0>0$ (less than the injectivity radius of $(N,g)$ at $y$) and let us denote by $\left\{x\right\}$ a system of geodesic normal coordinates centered at $y$ and by $g_{ij}(x)$ the corresponding components of the Riemannian metric $g$. For $k$ large enough we know that $M_{k}\cap B_{r_0/2}(z_k)\subset B_{r_0}(y)$ and we can assume, without loss of generality, that $M_{k}\cap B_{r_0}(y)$ is two-sided. We can then consider the blown-up hypersurfaces defined by
\[
\hat{M}_{k}:= |A_k(z_k)|(M_k-z_k)
\]
and the appropriately rescaled Riemannian metrics on $\hat{B}_{k}:=B_{r_0 |A_k(z_k)|/2}(0)\subset\mathbb{R}^{n+1}$
\[
\hat{g}_k(x):=g\left(z_k+\frac{x}{|A_k(z_k)|} \right).
\]
(For the sake of clarity we have identified, in the equation above, the hypersurface $M_k$ with its portion in $B_{r_0/2}(z_k)$).
Now, the hypersurface $\hat{M}_{k}$ is minimal in metric $\hat{g}_k$ and patently satisfies volume and curvature bounds, for $M_k\in\mathcal{M}_1(\Lambda,\mu)$ implies
\[
\frac{\mathcal{H}^{n}(\hat{M}_k\cap B_r(0))}{r^{n}}\leq \Lambda, \ \textrm{for any} \ r< \frac{r_0 |A_k(z_k)|}{2}
\]
and by scaling
\[
\sup_{x\in\hat{B}_k}|\hat{A}_k(x)|\leq 1, \ \ \hat{A}_k(0)=1
\]
where $\hat{A}_k$ denotes the second fundamental form of $\hat{M}_k$ in metric $\hat{g}_k$. It follows that the sequence $\left\{\hat{M}_k\right\}$ converges (for any $m$ in $C^{m}_{loc}$, in the sense of smooth graphs) to a complete, embedded minimal hypersurface $\hat{M}_{\infty}\subset \mathbb{R}^{n+1}$ (in flat Euclidean metric). We further claim that $\hat{M}_{\infty}$ has to be stable. If not, we could find a smooth, compactly supported vector field $u$ such that the second variation 
\[
\left[\frac{d^2}{dt^2}\right]_{t=0}\mathcal{H}^{n}\left((\varphi_t)_{\#}\hat{M}_{\infty}\right)<0
\]
where $\left\{\varphi_t\right\}$ is the flow of diffeomorphisms generated by $u$ (which coincides with the identity outside of a compact set). As a result, thanks to the locally strong convergence $\hat{M}_k\to\hat{M}_{\infty}$ we would have
\[
\left[\frac{d^2}{dt^2}\right]_{t=0}\mathcal{H}^{n}\left((\varphi_t)_{\#}\hat{M}_{k}\right)<0
\]
for all indices $k$ that are large enough and, more specifically, for a fixed open, bounded set $\Omega\subset\mathbb{R}^{n+1}$ we would have $\lambda^{\hat{M}_{k}}_1(\Omega)\leq -\varepsilon$ for some $\varepsilon>0$. Therefore, scaling back and keeping in mind the Rayleigh characterization of the eigenvalues of an elliptic operator we must conclude that
\[
-\mu\leq \lambda^{M_k}_1(\Omega)\leq -\varepsilon |A_k(z_k)|,
\]
which is impossible when $k$ attains sufficiently large values. It follows that $\hat{M}_{\infty}$ is a \textsl{stable} minimal hypersurface in $\mathbb{R}^{n+1}$ (with polynomial volume growth), hence an affine hyperplane by the work of Schoen-Simon \cite{SS81} and thus on the one hand $\hat{A}_{\infty}$ vanishes identically, while on the other $\hat{A}_{\infty}(0)=1$ and this contradiction completes the proof of our initial claim. Once those uniform curvature estimates are gained, the strong convergence of $M_{k}\to M$ follows from a geometric counterpart of the Arzel\'a-Ascoli compactness theorem, and the fact that in this case the volume and first Jacobi eigenvalue of $M$ are also controlled, namely $M\in\mathcal{M}_1(\Lambda,\mu)$ is also clear.
\end{proof}

\begin{proof}[Proof of Theorem \ref{thm:main}, case $p\geq2$.]
	
We shall start by stating the following important:

\begin{lemma}\label{lem:parti}
Let $\Omega_1, \Omega_2,\ldots, \Omega_p\subset N$ be $p$ pairwise disjoint, bounded open sets. If we assume $M\in\mathcal{M}_p(\Lambda,\mu)$ and $M\cap \Omega_i\neq\emptyset$ for $i=1,\ldots,p$ then there exists an index $i_0$ such that $\lambda^{M}_1(\Omega_{i_0})\geq -\mu$.
\end{lemma}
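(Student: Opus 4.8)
The plan is to argue by contradiction, showing that if $\lambda^{M}_1(\Omega_i) < -\mu$ for \emph{every} $i=1,\dots,p$, then the $p$-th Jacobi eigenvalue of $M$ would satisfy $\lambda_p(M) < -\mu$, contradicting the assumption $M\in\mathcal{M}_p(\Lambda,\mu)$.

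First I would record the bookkeeping between the various quadratic forms. Integration by parts gives $-\int_M v L^{\bot}v = Q(v,v)$, so $\lambda^{M}_1(\Omega_i)$ is the infimum of $Q(v,v)$ over unit-norm sections $v\in C_0^\infty(\Omega_i;NM)$, and since $M\cap\Omega_i\neq\emptyset$ this infimum is taken over a nonempty set. Hence, under the contradiction hypothesis, for each $i$ I can select $v_i\in C_0^\infty(\Omega_i;NM)$ with $\int_M|v_i|^2=1$ and $Q(v_i,v_i)<-\mu$; extended by zero these are genuine elements of $\Gamma(NM)$.

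The key structural observation is locality. The sets $\Omega_i$ are pairwise disjoint, so the (relatively compact) supports of the $v_i$ are pairwise disjoint subsets of $M$, and because both $Q$ and the $L^2$ inner product are integrals of pointwise expressions, all the cross terms vanish: $Q(v_i,v_j)=0$ and $\int_M\langle v_i,v_j\rangle=0$ for $i\neq j$. Consequently $V:=\mathrm{span}\{v_1,\dots,v_p\}$ is a genuinely $p$-dimensional subspace of $\Gamma(NM)$ on which, for $v=\sum_i c_i v_i\neq 0$,
\[
\frac{Q(v,v)}{\int_M|v|^2}=\frac{\sum_i c_i^2\,Q(v_i,v_i)}{\sum_i c_i^2}\leq \max_{1\leq i\leq p}Q(v_i,v_i)<-\mu,
\]
the middle ratio being a convex combination of the numbers $Q(v_i,v_i)$. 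Plugging $V$ into the Rayleigh--Courant characterization recalled in Section \ref{sec:prelim} then gives $\lambda_p(M)\leq\max_{v\in V}Q(v,v)/\int_M|v|^2<-\mu$, which is the desired contradiction; therefore some index $i_0$ must satisfy $\lambda^{M}_1(\Omega_{i_0})\geq-\mu$.

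I do not anticipate any genuine obstacle here. The only points demanding care are (i) getting the signs right in the identity $-\int_M v L^{\bot}v=Q(v,v)$ and in the direction of the inequalities, and (ii) the non-orientable case, where one simply repeats the argument on the oriented double cover $\tilde{M}$ with the still pairwise disjoint open sets $\pi^{-1}(\Omega_i)$, $\tau$-invariant test sections, and the form $\tilde{Q}$ --- no new difficulty arises. Conceptually the lemma is just the standard fact that $p$ mutually $Q$-orthogonal ``unstable'' directions force the $p$-th eigenvalue below the common threshold $-\mu$.
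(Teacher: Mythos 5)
Your proof is correct and follows essentially the same route as the paper: argue by contradiction, pick unit-norm test sections $\phi_i\in C_0^\infty(\Omega_i;NM)$ with $Q(\phi_i,\phi_i)<-\mu$, use disjointness of the $\Omega_i$ to see the span is $p$-dimensional with Rayleigh quotient below $-\mu$, and invoke the min--max characterization; the non-orientable case is handled the same way via the double cover. You merely spell out the orthogonality of the cross-terms and the convex-combination estimate that the paper leaves implicit.
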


\
Indeed, suppose that were not the case: then we could find, for each index $i=1,\ldots, p$ a section $\phi_{i}\in C^{\infty}_{0}(\Omega_i;NM)$ that ensures $\lambda^{M}_{1}(\Omega_i)<-\mu$ (that is to say $\int_M|\phi_i|^2=1$ and $Q(\phi_i,\phi_i)<-\mu$) and thus
\[
Q(\phi,\phi)<-\mu \ \textrm{for all} \ \phi\in W=\left<\phi_1,\ldots, \phi_p\right>_{\mathbb{R}} \ \textrm{such that} \ \int_M |\phi|^2 =1
\]
which contradicts the assumption that $\lambda_p(M)\geq -\mu$ due to the well-known min-max characterization of the eigenvalues of an elliptic operator.		
(In case $M$ is not orientable, one needs to consider the space $\tilde{W}=\left<\tilde{\phi_1},\ldots, \tilde{\phi_p}\right>_{\mathbb{R}}$ where each $\tilde{\phi_i}$ is the lift of $\phi_i$ to $\tilde{M}$ and the quadratic form $Q$ is evaluated on $\tilde{M}$ as explained in Section \ref{sec:prelim}).

\

Now, let a sequence $\left\{M_k\right\}\subset\mathcal{M}_p(\Lambda,\mu)$ be given: thanks to the volume bound, we know that possibly by taking a subsequence (which we shall not rename) $M_k\to \textbf{V}$ in the sense of varifolds, for some integral varifold $\textbf{V}$. Furthermore, given $\varepsilon>0$ the Lemma \ref{lem:parti} we have just stated and a standard covering argument ensure that there exists a set $\mathcal{Y}=\left\{y_i\right\}_{i=1}^{P}\subset\textrm{spt}(\textbf{V})$ consisting of at most $p-1$ points and a subsequence $\left\{M_{l(k)}\right\}$ such that in $N\setminus\mathcal{Y}$ the hypersurfaces $M_{l(k)}$ locally converge to $\textbf{V}$ strongly in the sense of smooth graphs. This descends from the fact that for any $z\in N\setminus\mathcal{Y}$ and $\varepsilon>0$ the sequence $M_{l(k)}$ satisfies a uniform bound on the \textsl{first} Jacobi eigenvalue, which in turn implies
\[
\sup_{k\geq 1}\sup_{x\in B_{\varepsilon}(z)}|A_{l(k)}(x)|\leq C=C(N,\Lambda,\mu)
\]
by following the argument that has been used to prove Theorem \ref{thm:main} in the case $p=1$. Here $A_{l(k)}$ stands for the second fundamental form of $M_{l(k)}$ in the ambient manifold $(N,g)$. In particular, this implies that the varifold $\textbf{V}$ is supported on a smooth submanifold $M$ away from finitely many points, namely those points belonging to the set $\mathcal{Y}$. 

 We further claim that for any $y\in\mathcal{Y}$ there exists $\varepsilon_0>0$ such that
\[
\lambda^{M}_1(B_{\varepsilon_0}(y)\setminus\left\{y \right\})\geq-\mu \ \ \ (*)
\]
If this claim were false, then we could find a smooth, normal vector field $u_0$, compactly supported in $B_{\varepsilon_0}(y)\setminus\left\{y \right\}$ and hence (say) supported in $B_{\varepsilon_0}(y)\setminus B_{\varepsilon_1}(y)$ for some $0<\varepsilon_1<\varepsilon_0$ such that
\[
\frac{Q(u_0,u_0)}{\int_{M}|u_0|^2}<-\mu.
\]
At that stage, we shall observe that it cannot be $\lambda^{M}_1(B_{\varepsilon_1}(y)\setminus\left\{y \right\})\geq-\mu$ either (for otherwise we would have gained property $(*)$ with $\varepsilon_1$ in lieu of $\varepsilon_0$) and hence, again, there is a smooth vector field $u_1$ that is compactly supported in $B_{\varepsilon_1}(y)\setminus\left\{y \right\}$ and hence (say) supported in $B_{\varepsilon_1}(y)\setminus B_{\varepsilon_2}(y)$ for some $0<\varepsilon_2<\varepsilon_1$ such that
\[
\frac{Q(u_1,u_1)}{\int_{M}|u_1|^2}<-\mu
\]
with the same notation as above. Of course, we can repeat this argument $p$ times, hereby getting sections $u_0,\ldots, u_{p-1}$ supported on smaller and smaller annuli, specifically $u_j$ shall be supported on $B_{\varepsilon_j}(y)\setminus B_{\varepsilon_{j+1}}(y)$ for $0<\varepsilon_p<\ldots<\varepsilon_0$. But we already know that $M_{l(k)}\to M$ on (the closure of) $B_{\varepsilon_j}(y)\setminus B_{\varepsilon_{j+1}}(y)$ for each $j\leq p-1$ and thus we derive (for $k$ large enough) the conclusion
\[
\lambda^{M_{l(k)}}_1(B_{\varepsilon_j}(y)\setminus B_{\varepsilon_{j+1}}(y))<-\mu, \textrm{for} \ \ j=0,1,\ldots,p-1 
\]
which contradicts our preliminary Lemma \ref{lem:parti}. This ensures the validity of $(*)$ for some suitable choice of $\varepsilon_0>0$. At that stage, let $\textbf{V}^{(i)}$ for each fixed $y_i\in\mathcal{Y}$ be a tangent cone to the varifold $\textbf{V}$ at $y_i$: necessarily $\textbf{V}^{(i)}$ has to be stable, for otherwise we could scale back and argue as in the proof of Theorem \ref{thm:main} to show that the bound $(*)$ cannot possibly hold. As a result, $\textbf{V}^{(i)}$ is a stable minimal hypercone in $\mathbb{R}^{n+1}$ and hence an hyperplane for $2\leq n\leq 6$ due to the classic work of J. Simons \cite{Sim68}. Therefore $\textbf{V}$ is regular (in fact, smooth) in a neighborhood of each point $y_i$ thanks to Allard's regularity theorem \cite{All72}, and we can conclude that its support $M$ is a smooth, minimal hypersurface in $(N,g)$, as we had to prove.

Varifold convergence directly implies that $\mathcal{H}^{n}(M)\leq\Lambda$, while the fact that $\lambda_p(M)\geq -\mu$ is more delicate as the set $\mathcal{Y}$ may not be empty. To that aim, we argue as follows. For small $r>0$ let $\eta^{(j)}_r$ be a smooth non-negative function on $M$ that vanishes on the geodesic ball of radius $r$  centered at $y_j$ and equals one outside of the ball of radius $2r$. For the sake of contradiction, suppose there exists $p$ linearly independent (and orthonormal) sections in $W^{1,2}(M;NM)$ (in fact $\tilde{W}^{1,2}(\tilde{M};\pi^{\ast}NM)$ when $M$ is not orientable), say $\phi_1,\ldots,\phi_p$ such that
\[
\frac{Q(\phi,\phi)}{\int_{M}|\phi|^2}<-\mu \ \textrm{on} \ V=\left<\phi_1,\ldots,\phi_p \right >_{\mathbb{R}}
\]
and set $\phi^r_i=\phi_i\prod_{j=1}^{P}\eta^{(j)}_r$. The fact that points have zero capacity in $\mathbb{R}^{n}$ for any $n\geq 2$ implies that on the subspace $V_r=\left<\phi^r_1,\ldots,\phi^r_p \right >_{\mathbb{R}}$ the inequality above must also hold for $r$ small enough, and hence (replacing each $\phi_i$ by $\phi^{r}_i$ and then applying the Gram-Schmidt process to the latter family, without further renaming) we can assume that the sections in question vanish on small geodesic neighborhoods of the points in the set $\mathcal{Y}$.
Now, such vector fields can be extended to a tubular neighborhood of $M\hookrightarrow N$ (without renaming) and since each $M_k$ has to be contained in that neighborhood for $k$ large enough we can define sections $\phi^{r,k}_i\in \Gamma(M_k;NM_k)$ by projecting those extended vector fields onto the normal bundle of $M_k\hookrightarrow N$.
 The strong convergence of $M_k$ to $M$ away from the points in $\mathcal{Y}$ together with the assumption $M_k\in\mathcal{M}_p(\Lambda,\mu)$ implies that
 we can find real coefficients $\alpha^k_1,\ldots, \alpha^k_{p}$ such that
\[
\sum_{i=1}^{p}\alpha^k_{i}\phi^{r,k}_i=0.
\]
Possibly by dividing the coefficients by $\max_i |\alpha^k_i|$ and renaming we can assume that $|\alpha^k_{i}|\leq 1$ for each index $i$ and $|\alpha^k_{i_0}|=1$ for some index $i_0$. Hence, squaring the previous equation and integrating over $M_k$ we get
\[
0=\int_{M_k}\left|\sum_{i=1}^{p}\alpha^k_{i}\phi^{r,k}_i \right|^2=\sum_{i,j}\alpha^k_i\alpha^k_j\int_{M_k} g\left(\phi^{r,k}_i,\phi^{r,k}_j\right)
\]
and by letting $k\to\infty$ the orhogonality of the family $\left\{\phi^r_1,\ldots, \phi^r_p\right\}$ implies that
\[
\sum_{i=1}^{p}(\alpha_i)^2=0
\]
where (possibly by extracting a subsequence, which we shall not rename) $\alpha^k_{i}\to\alpha_i$ as $k\to \infty$ for each $i=1,\ldots, p$. Thus $\alpha_i=0$ for each $i$ but on the other hand (by construction) $\sum_{i=1}^{p}(\alpha_i)^2\geq 1$, a contradiction. This proves that $M\in\mathcal{M}_p(\Lambda,\mu)$.
Lastly, the fact that single-sheeted convergence implies $\mathcal{Y}=\emptyset$ is a direct consequence of Allard's interior regularity theorem, \cite{All72}. Thereby, the proof is complete. 
\end{proof}

\begin{remark}\label{rem:ricpos}
When the number of leaves in the convergence of $M_k$ to $M$ is known, then one can deduce further information about the limit hypersurface $M$. Specifically:	
	\begin{itemize}
		\item{if the number of sheets in the convergence is one}
		\begin{itemize}
			\item if $M$ is two-sided and $M_k\cap M=\emptyset$ eventually then $M$ is stable 
			\item if $M$ is two-sided and $M_k\cap M\neq \emptyset$ eventually then $index(M)\geq 1$
		\end{itemize}
		\item {if the number of sheets in the convergence is at least two}
		\begin{itemize}
			\item if $N$ has $Ric_N >0$ then $M$ cannot be one-sided
			\item if $M$ is two-sided then $M$ is stable. 
		\end{itemize}
	\end{itemize}
All of these statements follow from variations on the same argument, which consists of conctructing a global section in the kernel of the Jacobi operator $L$ of $M$ (or a suitable lift, in the case of the third assertion) by appropriately renormalizing the distance function between $M$ and $M_k$ (first and second assertion) and two adjacent leaves of $M_k$ (third and fourth assertion). The reader can consult pages 10-13 of \cite{Sha15} for detailed arguments.	 
\end{remark}	

\begin{remark}\label{rem:mult2}
The assertion given in part (1) of Theorem \ref{thm:main} is \textsl{sharp} at that level of generality in the sense that one can provide explicit examples of Riemannian manifolds $(N,g)$ and sequences $\left\{M_k\right\}\subset\mathcal{M}_{1}(\Lambda,\mu)$ such that $M_k\to M$ in the sense of smooth graphs, but with multiple leaves. For instance: let $(N^3,g)$ be gotten by taking the quotient of the product manifold $(S^2\times\mathbb{R}, g_{round}\times dt^2)$ modulo the equivalence relation $(x,t)\sim (-x,-t)$. If we consider $\pi$ the associated Riemannian projection, then $\pi(S^{2}\times\left\{t\right\})$ is a totally geodesic, stable minimal sphere for any $t\neq 0$ while $\pi(S^{2}\times\left\{0\right\})$ is a stable minimal $\mathbb{R}\mathbb{P}^{2}$ and for any sequence $t_{k}\downarrow 0$ we have that $\pi(S^2\times\left\{t_k\right\})\to\pi(S^2\times\left\{0\right\})$ strongly in the sense of graphical, two-sheeted convergence. 	
	
\end{remark}	

\textsl{Acknowledgments}. The authors wish to express their gratitude to Prof. Andr\'e Neves for a number of enlightening conversations. During the preparation of this article, L. A. was supported by CNPq-Brazil, while A.C. and B. S. were supported by Prof. Andr\'e Neves through his Leverhulme and European Research Council Start Grant.

\end{document}